\documentclass[oupthm]{my}

\usepackage{graphicx}
\usepackage{stackengine}
\usepackage{scalerel}
\usepackage{tensor}       
\usepackage{array}
\usepackage{makecell}
\newcolumntype{x}[1]{>{\centering\arraybackslash}p{#1}}
\usepackage{tikz}
\usepackage{pgfplots}

\stackMath

\makeatletter

\tikzset{meter/.append style={draw, inner sep=10, rectangle, font=\vphantom{A}, minimum width=30, line width=.8, path picture={\draw[black] ([shift={(.1,.3)}]path picture bounding box.south west) to[bend left=50] ([shift={(-.1,.3)}]path picture bounding box.south east);\draw[black,-latex] ([shift={(0,.1)}]path picture bounding box.south) -- ([shift={(.3,-.1)}]path picture bounding box.north);}}}

\definecolor{Blues5seq1}{RGB}{239,243,255}
\definecolor{Blues5seq2}{RGB}{189,215,231}
\definecolor{Blues5seq3}{RGB}{107,174,214}
\definecolor{Blues5seq4}{RGB}{49,130,189}
\definecolor{Blues5seq5}{RGB}{8,81,156}

\definecolor{Greens5seq1}{RGB}{237,248,233}
\definecolor{Greens5seq2}{RGB}{186,228,179}
\definecolor{Greens5seq3}{RGB}{116,196,118}
\definecolor{Greens5seq4}{RGB}{49,163,84}
\definecolor{Greens5seq5}{RGB}{0,109,44}

\definecolor{Reds5seq1}{RGB}{254,229,217}
\definecolor{Reds5seq2}{RGB}{252,174,145}
\definecolor{Reds5seq3}{RGB}{251,106,74}
\definecolor{Reds5seq4}{RGB}{222,45,38}
\definecolor{Reds5seq5}{RGB}{165,15,21}

\usepackage[most,breakable]{tcolorbox}
\makeatletter
\renewenvironment{boxed}{\begingroup\@ifnextchar\bgroup\boxed@gobegin\boxed@gobegin@empty}{\end{tcolorbox}\endgroup}
\def\boxed@gobegin#1{\def\@tempa{#1}\def\@tempb{orange}\ifx\@tempa\@tempb\begin{tcolorbox}[colback=red!15,colframe=orange!70,breakable,enhanced]\else\begin{tcolorbox}[colback=Blues5seq1,colframe=Blues5seq5,breakable,enhanced]\fi}
\def\boxed@gobegin@empty{\begin{tcolorbox}[colback=Blues5seq1,colframe=Blues5seq5,breakable,enhanced]}
\makeatother

\usepackage{xcolor}
\usepackage[normalem]{ulem}
\usepackage{dsfont}
\usepackage{cite}
\usepackage[numbers, sort&compress]{natbib}
\usepackage{footnote}
\usepackage{fixfoot}
\usepackage{dsfont}
\usepackage{bigints}
\usepackage{tikz}
\usepackage{geometry}
\usepackage{enumitem}   
\newgeometry{bottom=20mm} 

\usepackage{graphicx}
\usepackage{multicol}
\usepackage{amsmath,amssymb,amsfonts, mathtools}
\usepackage{mathrsfs}

\usepackage{rotating}
\usepackage[title, toc]{appendix}
\usepackage[numbers]{natbib}
\usepackage{changepage,afterpage}
\usepackage{etruscan}
\usepackage{tikz, siunitx, newunicodechar}
\usepackage{booktabs,multirow}
\tcbuselibrary{skins,breakable}
\usepackage{xcolor}
\usepackage{lipsum}
\usepackage{longtable}

\usepackage{bm,bbm}
\usepackage{braket}

\usepackage[utf8]{inputenc}
\usepackage{array}
\usepackage{colortbl}

\allowdisplaybreaks[4]

\usepackage[T3,T1]{fontenc}
\DeclareSymbolFont{tipa}{T3}{cmr}{m}{n}
\DeclareMathAccent{\invbreve}{\mathalpha}{tipa}{16}

\theoremstyle{oupplain}
\newtheorem{theorem}{Theorem}[section]
\newtheorem{lemma}[theorem]{Lemma}

\newtheorem{corollary}[theorem]{Corollary}
\theoremstyle{oupdefinition}

\theoremstyle{oupremark}

\newtheorem{example}[theorem]{Example}
\theoremstyle{oupproof}

\definecolor{ashgrey}{rgb}{0.7, 0.75, 0.71}
\newenvironment{proof}{
	\tcolorbox[blanker,breakable,left=5mm,parbox=false,
    before upper={\parindent15pt},
    after skip=10pt,
	borderline west={0.7mm}{0pt}{Blues5seq1}]
    {\noindent{\it \textbf{Proof:}}}
}{
    \textcolor{black}{\hbox{}\nobreak\hfill$\blacksquare$} 
    \endtcolorbox
}

\numberwithin{equation}{section}

\articletype{RESEARCH ARTICLE}

\begin{document}
\setlength{\abovedisplayskip}{12pt}
\setlength{\belowdisplayskip}{12pt}

\begin{Frontmatter}
\title{Weak supermajorization between symplectic spectra of positive definite matrix and its pinching}

    \author{Temjensangba \hspace{-0.10cm}\thanks{Department of Mathematics and Computing, Indian Institute of Technology (ISM) Dhanbad, Jharkhand 826004, India; {\email{temjensangba111@gmail.com}}} and Hemant K. Mishra \hspace{-0.10cm}\thanks{Department of Mathematics and Computing, Indian Institute of Technology (ISM) Dhanbad, Jharkhand 826004, India; {\email{hemantmishra1124@iitism.ac.in}}}
    }

    \abstract 
    { Let $A = \begin{bsmallmatrix} E & F \\ F^T  & G \end{bsmallmatrix}$ be a $2n \times 2n$ real positive definite matrix, where $E, F,$ and $G$ are $n \times n$ blocks.
 It is shown that
 \begin{equation*}
     d(E \oplus G) \prec^w d(A).
 \end{equation*}
  Here $d(A)$ denotes the $n$-vector consisting of the symplectic eigenvalues of $A$ arranged in the non-decreasing order.
    We also observe the following weak supermajorization relation, which is interesting on its own:
    \begin{align*} 
     \lambda \left( \left(\mathscr{C}(G)^{1/2} \mathscr{C}(E) \mathscr{C}(G)^{1/2}\right)^{1/2} \right) \prec^w \lambda \left( \left(G^{1/2} E G^{1/2} \right)^{1/2} \right).
 \end{align*} 
   Here $\lambda \left( \left( G^{1/2}E G^{1/2} \right)^{1/2} \right)$ denotes the $n$-vector with entries given by the eigenvalues of $\left( G^{1/2}E G^{1/2} \right)^{1/2}$ in the non-decreasing order.
  }

    \keywords{
        Positive definite matrix, pinching, symplectic eigenvalue, weak supermajorization. \\
    MSC: 15B48, 15A18, 15A42}
\end{Frontmatter}
    
\section{Introduction}
Symplectic eigenvalues have been extensively explored over the last two decades and it continues to be a vibrant research field.
Several analogs of classic eigenvalue results are developed for symplectic eigenvalues \cite{Mishra2020_first, BhatiaJai2021_variational, Jain2021_sumsandproducts, JainMishra2022_derivatives, Paradan2022_theHorncone, son2022symplectic, BabuMishra2024_blockperturbation, kamat2026simultaneous}.
In particular, symplectic versions of several majorization results on eigenvalues, such as Lidskii's theorem \cite{JainMishra2022_derivatives} and Schur--Horn theorem \cite{BhatiaJain2020_schurhorn, huang2023newversion} are known today, and these results work with weak supermajorization in place of majorization.
For instance, it is known that the eigenvalues of a Hermitian matrix majorize the eigenvalues of its pinching (see, e.g., \cite{lin2012_eigenvaluemajorization}).
A symplectic analog of this result given in \cite[Theorem~9]{BhatiaJain2015_onsymplectic} states that the symplectic eigenvalues of a positive definite matrix weakly supermajorize the symplectic eigenvalues of its \emph{symplectic pinching}.
It is to be noted that a symplectic pinching operation is fundamentally different from a classical pinching operation.
To the best of our knowledge, no symplectic analog of the aforementioned eigenvalue result is known for a classical pinching.

In this letter, we establish a weak supermajorization relation between the symplectic eigenvalues of a real positive definite matrix and that of its \emph{pinching} leaving two diagonal blocks of the same size.
More concretely, let
$A = \begin{bsmallmatrix}
    E & F \\ F^T & G
\end{bsmallmatrix}$
be a $2n \times 2n$ real positive definite matrix, where $E, F$, and $G$ are $n \times n$ matrices.
In our main result, we show that the symplectic eigenvalues of $E \oplus G$ are weakly supermajorized by the symplectic eigenvalues of $A$.
We also report some interesting consequences of the main result.

The rest of the paper is structured as follows. 
In Section~\ref{preliminaries}, we briefly discuss some prerequisites.
The main result and its consequences are given in Section~\ref{main result}.

\section{Preliminaries} \label{preliminaries}
In this section, we briefly discuss the prerequisites needed for our discourse.
Denote by $\mathds{M}_{m,n}$ the set of all $m \times n$ real matrices and let $\mathds{M}_n \coloneqq \mathds{M}_{n, n}$.
Let $\mathds{P}_{n} \subset \mathds{M}_{n}$ consisting of the positive definite matrices.
For $A \in \mathds{M}_n$, let $\sigma(A)$ denote the eigen spectrum of $A$. If all the eigenvalues of $A$ are real, let $\lambda(A) = [\lambda_1(A), \ldots, \lambda_n(A)]^T$ denote the $n$-vector whose entries are the usual eigenvalues of $A$ arranged in the non-decreasing order. For any $A = [a_{ij}] \in \mathds{M}_n$, we write $\Delta(A) = \left[ a_{11}, \ldots, a_{nn} \right]^T$.

A matrix $M \in \mathds{M}_n$ is said to be symplectic if it satisfies
\begin{equation} \label{eq:definition of symplectic  matrix}
    M^T J_{2n} M = J_{2n},
\end{equation}
where $J_{2n} \coloneqq \begin{bsmallmatrix}
        0 & I_n \\ -I_n & 0
    \end{bsmallmatrix}$, 
and $I_n$ denotes the identity matrix of size $n \times n$.
We shall drop the ``$2n$'' from $J_{2n}$ wherever it is clear from the context.
The set of all $2n \times 2n$ symplectic matrices forms a group under matrix multiplication called the symplectic group, and we denote this group by $\mathds{SP}_{2n}$.
A set of vectors $\{ x_1,\ldots x_n, y_1,\ldots, y_n \}$ in $\mathds{R}^{2n}$ is said to be a symplectic basis if it satisfies for each $1 \leq k, \ell \leq n$:
\begin{align}
    x_k^T J x_\ell &= y_k^T J y_\ell = 0, \\
    x_k^T J y_\ell &= \begin{cases}
        1, \text{ if } k = \ell \\
        0, \text{ if } k \neq \ell.
    \end{cases}
\end{align}
There is a one-to-one correspondence between the set of all symplectic bases of $\mathds{R}^{2n}$ and the symplectic group $\mathds{SP}_{2n}$.

Williamson's theorem \cite{Williamson} states that for any $A \in \mathds{P}_{2n}$, there exists $M \in \mathds{SP}_{2n}$ such that
\begin{equation} \label{eq:williamson's decomposition}
    M^T A M = D \oplus D,
\end{equation}
where $D \in \mathds{P}_{n}$ is a diagonal matrix which is unique up to permutation of its diagonal entries.
The diagonal entries of $D$ are called the symplectic eigenvalues of $A$.
Let $\sigma_s(A)$ denote the set of all symplectic eigenvalues of $A$ and call it the symplectic spectrum of $A$.
Let $d(A)=[d_1(A),\ldots, d_n(A)]^T$ denote the $n$-vector consisting of the symplectic eigenvalues of $A$ with entries arranged in the non-decreasing order.
The set of all symplectic matrices which diagonalizes $A$ in the sense of Williamson's theorem \eqref{eq:williamson's decomposition} is denoted by $\mathds{SP}_{2n}(A)$.
If $\{ x_1,\ldots, x_n, y_1,\ldots, y_n \}$ is the symplectic basis of $\mathds{R}^{2n}$ given by the columns of $M$ in \eqref{eq:williamson's decomposition}, then we have 
\begin{equation}
    Ax_k = d_k(A) Jy_k, \quad Ay_k = -d_k(A) J x_k, \qquad \forall 1 \leq k \leq n.
\end{equation}
We call each pair $(x_k, y_k)$ a symplectic eigenvector pair of $A$ corresponding to the symplectic eigenvalue $d_k(A)$.
It follows from \eqref{eq:definition of symplectic  matrix} and \eqref{eq:williamson's decomposition} that $M \in \mathds{SP}_{2n}(A)$  if and only if the $k$th and the $(n+k)$th columns of $M$ form a symplectic eigenvector pair of $A$ corresponding to the symplectic eigenvalue $d_k(A)$ for all $k \in \{1, \ldots, n \}$.

Suppose $m_1, \ldots, m_k \in \mathds{N}$ and $n=m_1 + \ldots + m_k$.
Let $H=[H_{ij}] \in \mathds{M}_n$ with blocks $H_{ij} \in \mathds{M}_{m_i,m_j}$ for $1 \leq i,j \leq k$.
A pinching of $M$ relative to $(m_1, \ldots, m_k)$, denoted by $\mathscr{C}(H)$, is defined as
\begin{equation} \label{eq:definition_of_pinching}
     \mathscr{C}(H) = \bigoplus_{j = 1}^k H_{jj},
\end{equation}
which is the direct sum of $H_{11}, \ldots, H_{kk}$.
Let $A = \begin{bsmallmatrix}
    E & F \\ F^T & G
\end{bsmallmatrix}
\in \mathds{P}_{2n}$, where $E, F, G$ are $n \times n$ blocks.
The symplectic pinching or $s$-pinching of $A$ relative to $(m_1, \ldots, m_k)$, denoted by $\mathscr{C}^s(A)$, is defined as
\begin{equation} \label{eq:definition_of_s_pinching}
    \mathscr{C}^s(A) := \begin{bmatrix}
        \mathscr{C}(E) & \mathscr{C}(F) \\
        \mathscr{C}(F^T) & \mathscr{C}(G)
    \end{bmatrix}.
\end{equation}
Also, define 
\begin{equation} \label{eq:definition_of_symplectic_diagonal}
    \Delta_s(A) \coloneqq \left[ \sqrt{\eta_{1} \gamma_{1}}, \ldots, \sqrt{\eta_{n} \gamma_{n}} \right]^T,
\end{equation}
where $\eta_{i}$ and $\gamma_i$ are the $i$th diagonal entries of $E$ and $G$, respectively, for $1 \leq i \leq n$.

For any $x \in \mathds{R}^n$, we denote the entries of $x$ arranged in non-decreasing order by $x_1^\uparrow, \ldots, x_n^\uparrow$.
For any $x, y \in \mathds{R}^n$,  we say that $x$ is weakly supermajorized by $y$, denoted by $x \prec^w y$, if
\begin{equation} \label{eq:weak_supermajorization_definition}
    \sum_{j = 1}^k x_j^\uparrow \geq \sum_{j = 1}^k y_j^\uparrow, \qquad 1 \leq k \leq n.
\end{equation}
If equality holds in \eqref{eq:weak_supermajorization_definition} for $k = n$, we say that $x$ is majorized by $y$, denoted by $x \prec y$.

\section{Main result} \label{main result}
Throughout this section, we shall consider $A = \begin{bsmallmatrix} E & F \\ F^T  & G \end{bsmallmatrix} \in \mathds{P}_{2n}$, where $E, F, G$ are $n \times n$ blocks.
It will be worthwhile to recall from \cite[Section 2]{BhatiaJain2020_schurhorn} that, if $E, F,$ and $G$ are diagonal matrices, then
\begin{equation} \label{eq:symplectic_eigenvalues_of_diagonal}
    \sigma_s(A) = \left\{ \sqrt{\eta_1 \gamma_1 - \beta_1^2},\ldots, \sqrt{\eta_n \gamma_n - \beta_n^2} \right\},
\end{equation}
where $\eta_j, \gamma_j,$ and $\beta_j$ are the $j$th diagonal entries of $E, G,$ and $F$ for $1 \leq j \leq n$, respectively.

The following lemma will be useful in the proof of the main result.

\begin{boxed}
\begin{lemma} \label{thm:main_lemma}
 We have
 \begin{align} \label{eq:d(b) = lambda(GEG)}
     d(E \oplus G) &= \lambda \left( \left(G^{1/2} E G^{1/2} \right)^{1/2} \right).
 \end{align}
 Consequently, we have
 \begin{equation} \label{eq:square_roots_of_the_eigenvalues}
    \sigma_s(E \oplus G) = \left\{ \sqrt{\lambda} : \lambda \in \sigma(EG) \right\}.
 \end{equation}
 \end{lemma}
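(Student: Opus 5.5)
The idea is to exhibit an explicit symplectic matrix that puts $E \oplus G$ into Williamson normal form, and then read off the symplectic eigenvalues. Set $P \coloneqq (G^{1/2} E G^{1/2})^{1/2} \in \mathds{P}_n$ and diagonalize it as $P = U D U^T$ with $U$ orthogonal and $D$ diagonal positive, whose diagonal is $\lambda(P)$. We look for a block diagonal $M = X \oplus Y$. Such a matrix is symplectic if and only if $X^T Y = I_n$, i.e.\ $Y = X^{-T}$, since
\begin{equation*}
M^T J M = \begin{bmatrix} 0 & X^T Y \\ -Y^T X & 0 \end{bmatrix}.
\end{equation*}
The condition $M^T (E\oplus G) M = D \oplus D$ then reduces to the pair of equations
\begin{equation*}
X^T E X = D \quad\text{and}\quad X^{-1} G X^{-T} = D.
\end{equation*}

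We solve these with the ansatz $X = G^{1/2} P^{-1/2} U$. Then $X^{-1} G X^{-T} = U^T P^{1/2} G^{-1/2} G G^{-1/2} P^{1/2} U = U^T P U = D$. For the other equation we compute
\begin{equation*}
X^T E X = U^T P^{-1/2} \bigl(G^{1/2} E G^{1/2}\bigr) P^{-1/2} U = U^T P^{-1/2} P^2 P^{-1/2} U = U^T P U = D.
\end{equation*}
Here we use that $P^{-1/2}$ commutes with $P^2$, since both are functions of $P$. The main point to get right is this choice of $X$; once it is made, both checks are one-line computations.

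By the uniqueness part of Williamson's theorem \eqref{eq:williamson's decomposition}, the diagonal entries of $D$ are exactly the symplectic eigenvalues of $E \oplus G$. Sorting them in non-decreasing order gives \eqref{eq:d(b) = lambda(GEG)}.

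For \eqref{eq:square_roots_of_the_eigenvalues}, note that $\sigma(P) = \{\sqrt{\mu} : \mu \in \sigma(G^{1/2}EG^{1/2})\}$. Moreover $G^{1/2} E G^{1/2} = G^{1/2}(EG)G^{-1/2}$ is similar to $EG$, so $\sigma(G^{1/2}EG^{1/2}) = \sigma(EG)$, and the claimed description of $\sigma_s(E \oplus G)$ follows.
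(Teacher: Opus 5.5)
Your proof is correct and is essentially the paper's argument written as a single congruence. Since $X = G^{1/2}P^{-1/2}U = G^{1/2}UD^{-1/2}$, your matrix $X \oplus X^{-T}$ factors as $(G^{1/2}\oplus G^{-1/2})(U\oplus U)(D^{-1/2}\oplus D^{1/2})$, which is the paper's two symplectic congruences followed by the diagonal rescaling that the paper leaves implicit by citing \eqref{eq:symplectic_eigenvalues_of_diagonal}; your second part, via the similarity of $EG$ and $G^{1/2}EG^{1/2}$, is identical to the paper's.
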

 \end{boxed}

 \begin{proof}
     We have
     \begin{equation}
         \left( G^{1/2} \oplus G^{-1/2} \right) \left(E \oplus G \right) \left( G^{1/2} \oplus G^{-1/2} \right) = (G^{1/2} E G^{1/2}) \oplus I_n.
     \end{equation}
By the spectral theorem, there exists an orthogonal matrix $P$ such that $G^{1/2} E G^{1/2}=P^T \Lambda P$, where $\Lambda = \operatorname{diag}(\lambda(G^{1/2} E G^{1/2}))$.
Since $\left( G^{1/2} \oplus G^{-1/2} \right)$ is a symplectic matrix, it follows that 
\begin{align}
    d \left( E \oplus G \right) 
        &= d \left( \left(G^{1/2} E G^{1/2} \right) \oplus I_n \right) \\
        &= d \left( P^T \Lambda P \oplus I_n \right) \\
        &= d \left( \left( P \oplus P \right)^T
         \left( \Lambda \oplus I_n \right)
         \left( P \oplus P \right) \right) \\
        &= d \left( \Lambda \oplus I_n \right) \\
        &= \left[ \sqrt{\lambda_1 \left(G^{1/2} E G^{1/2} \right)}, \ldots, \sqrt{\lambda_n \left( G^{1/2} E G^{1/2} \right)} \right]^T \label{eq:lambda oplus identity} \\
        &= \left[ \lambda_1 \left( \left( G^{1/2} E G^{1/2} \right)^{1/2} \right), \ldots, \lambda_n \left( \left(G ^{1/2} E G^{1/2} \right)^{1/2} \right) \right]^T,
\end{align}
where \eqref{eq:lambda oplus identity} follows from \eqref{eq:symplectic_eigenvalues_of_diagonal}.
This proves \eqref{eq:d(b) = lambda(GEG)}. 
Since $EG = G^{-1/2} \left( G^{1/2} E G^{1/2} \right)G^{1/2}$, we have
\begin{equation} \label{eq:similarity of matrices}
    \sigma \left( EG \right) = \sigma \left( G^{1/2} E G^{1/2} \right),
\end{equation}
which implies that
\begin{equation} \label{eq:this_proves_that}
    \left\{ \sqrt{\lambda} : \lambda \in \sigma(EG) \right\} = \sigma \left( \left( G^{1/2} E G^{1/2} \right)^{1/2} \right).
\end{equation}
The relations \eqref{eq:d(b) = lambda(GEG)} and \eqref{eq:this_proves_that} imply \eqref{eq:square_roots_of_the_eigenvalues}.
 \end{proof}

The following theorem is the main result.
\begin{boxed}
\begin{theorem} \label{thm:weak_supermajorization_of_pinching}
    We have
    \begin{equation} \label{eq:pinching-maj-symplectic}
        d(E \oplus G) \prec^w d(A).
    \end{equation}
\end{theorem}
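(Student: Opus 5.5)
The plan is to use the variational (trace-minimum) characterization of partial sums of symplectic eigenvalues due to Bhatia and Jain (the symplectic analogue of Ky Fan's minimum principle). For $B \in \mathds{P}_{2n}$ and $1 \le k \le n$ it states that
\begin{equation*}
    \sum_{j=1}^k d_j(B) = \min \left\{ \tfrac{1}{2}\operatorname{tr}\left( M^T B M \right) : M \in \mathds{M}_{2n,2k},\ M^T J_{2n} M = J_{2k} \right\}.
\end{equation*}
Moreover, the minimum is attained at the matrix whose columns are $x_1,\ldots,x_k,y_1,\ldots,y_k$, where $(x_j,y_j)$ are symplectic eigenvector pairs of $B$ for $d_j(B)$. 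Weak supermajorization $d(E\oplus G)\prec^w d(A)$ means $\sum_{j=1}^k d_j(E\oplus G) \ge \sum_{j=1}^k d_j(A)$ for every $k$. It therefore suffices to exhibit, for each $k$, a $2n\times 2k$ matrix $M$ with $M^TJ_{2n}M=J_{2k}$ such that $\tfrac12\operatorname{tr}(M^TAM)=\sum_{j=1}^k d_j(E\oplus G)$.

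\textbf{Step 1: block-separated eigenvectors of $E\oplus G$.} Following the proof of Lemma~\ref{thm:main_lemma}, I will build an explicit matrix in $\mathds{SP}_{2n}(E\oplus G)$. Let $G^{1/2}EG^{1/2}=P^T\Lambda P$ with $\Lambda$ diagonal and $\Lambda^{1/2} = \operatorname{diag}\left(d(E\oplus G)\right)$. Put $X = G^{-1/2}P^T\Lambda^{1/4}$ and $S = X \oplus X^{-T}$. Every matrix of the form $X\oplus X^{-T}$ is symplectic. A direct computation gives $X^TEX=\Lambda^{1/4}P\,G^{-1/2}EG^{-1/2}P^T\Lambda^{1/4}$, which I will instead arrange, by choosing $X = G^{1/2}P^T\Lambda^{-1/4}$ (possibly up to the correct power placement), to give $S^T(E\oplus G)S=\Lambda^{1/2}\oplus\Lambda^{1/2}$. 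Getting the exponents right is the only bookkeeping here; the essential point is that some block-diagonal symplectic $S=X\oplus X^{-T}$ diagonalizes $E\oplus G$ in the sense of Williamson. Consequently each symplectic eigenvector pair has the form $x_j=\begin{bsmallmatrix}u_j\\0\end{bsmallmatrix}$ and $y_j=\begin{bsmallmatrix}0\\v_j\end{bsmallmatrix}$.

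\textbf{Step 2: the off-diagonal block drops out.} Let $M$ have columns $x_1,\ldots,x_k,y_1,\ldots,y_k$, so that $M^TJ_{2n}M=J_{2k}$. Then
\begin{equation*}
    \operatorname{tr}(M^TAM)=\sum_{j=1}^k\left(x_j^TAx_j+y_j^TAy_j\right)=\sum_{j=1}^k\left(u_j^TEu_j+v_j^TGv_j\right)=\operatorname{tr}\left(M^T(E\oplus G)M\right).
\end{equation*}
The block $F$ never appears, because each $x_j$ is supported on the first $n$ coordinates and each $y_j$ on the last $n$. The right-hand side equals $2\sum_{j=1}^k d_j(E\oplus G)$, since $M$ consists of symplectic eigenvector pairs of $E\oplus G$.

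\textbf{Step 3: conclude.} Applying the minimum principle to $A$ gives $\sum_{j=1}^k d_j(A)\le \tfrac12\operatorname{tr}(M^TAM)=\sum_{j=1}^k d_j(E\oplus G)$ for every $k$, which is exactly \eqref{eq:pinching-maj-symplectic}.

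The main obstacle is Step 1: verifying that the Williamson diagonalizer of $E\oplus G$ can be chosen block-diagonal, i.e.\ of the form $X\oplus X^{-T}$, which is what makes $F$ drop out in Step 2. The proof of Lemma~\ref{thm:main_lemma} essentially already provides this, since every factor used there is of that form. The remaining step is a final rescaling by $\Lambda^{\mp1/4}\oplus\Lambda^{\pm1/4}$ to turn $\Lambda\oplus I_n$ into $\Lambda^{1/2}\oplus\Lambda^{1/2}$. Beyond that, Step 1 only requires citing the trace-minimum principle in the precise form stated above.
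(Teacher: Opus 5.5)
Your proposal is correct and follows essentially the paper's argument: it builds symplectic eigenvector pairs of $E\oplus G$ with each vector supported on a single block, so that $\operatorname{tr}(M^TAM)=\operatorname{tr}(M^T(E\oplus G)M)=2\sum_{j\le k}d_j(E\oplus G)$, and then applies the Bhatia--Jain trace-minimum principle. Your second choice $X=G^{1/2}P^T\Lambda^{-1/4}$ is exactly right, since it gives $X^TEX=X^{-1}GX^{-T}=\Lambda^{1/2}$, so the hedge is unnecessary; the resulting pairs are the paper's vectors, built from an orthonormal eigenbasis of $G^{1/2}EG^{1/2}$, up to the harmless swap $(x,y)\mapsto(-y,x)$.
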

\end{boxed}

\begin{proof}
    We have seen in \eqref{eq:similarity of matrices} that
\begin{equation}
    \sigma \left( EG \right) = \sigma \left( G^{1/2} E G^{1/2} \right).
\end{equation}
By the spectral theorem, there exists an orthonormal basis $\{v_1, \ldots, v_n\}$ of $\mathds{R}^n$ such that
\begin{equation} \label{eq:orthonormal_eigenvectors}
    G^{1/2} E G^{1/2} v_k = \lambda_k(EG) v_k, \qquad \forall 1 \leq k \leq n.
\end{equation}
Now, define $u_k \coloneqq \sqrt{d_k(E \oplus G)} G^{-1/2} v_k$.
Observe that
\begin{align}
    E G u_k &=  E G \left( \sqrt{d_k(E \oplus G)} G^{-1/2} v_k \right) \\
    &= \sqrt{d_k(E \oplus G)}  E G^{1/2} v_k \\
    &= \sqrt{d_k(E \oplus G)}  G^{-1/2} \left( G^{1/2} E G^{1/2} v_k  \right) \\
    &= \lambda_k(EG) \left( \sqrt{d_k(E \oplus G)} G^{-1/2} v_k \right) \\
    &= \lambda_k(EG) u_k. \label{eq:ge-eigenvalue-eqn}
\end{align}
By substituting $\lambda_k(EG) = d_k^2(E \oplus G)$ from \eqref{eq:square_roots_of_the_eigenvalues} into \eqref{eq:ge-eigenvalue-eqn}, we get
\begin{equation} \label{eq:uj_is_still_an_eigenvector_of_GE}
    EG u_k = d_k^2(E \oplus G) u_k, \qquad \forall 1 \leq k \leq n.
\end{equation}
Moreover, it can be easily verified that for every $ k, \ell \in \{1,\ldots, n \}$,
\begin{equation} \label{eq:kronecker_delta_relationship}
    u_k^T G u_\ell = \sqrt{d_k(E \oplus G)} \sqrt{d_\ell(E \oplus G)} \delta_{k\ell},
\end{equation}
where $\delta_{k\ell}=0$ if $k \neq \ell$, and $\delta_{kk}=1$.
Define
\begin{equation} \label{eq:define_xj_and_yj}
    x_k \coloneqq \begin{bmatrix} 0 \\ u_k \end{bmatrix}, \quad
    y_k \coloneqq \begin{bmatrix}  \frac{-1}{d_k(E \oplus G)}  G u_k \\ 0  \end{bmatrix} \qquad \forall 1 \leq k \leq n.
\end{equation}
It can be verified using \eqref{eq:kronecker_delta_relationship} and \eqref{eq:define_xj_and_yj} that
$\{x_1,\ldots, x_n, y_1,\ldots, y_n\}$ forms a symplectic basis of $\mathds{R}^{2n}$ so that $M \coloneqq \left[ x_1, \ldots, x_n, y_1, \ldots, y_n \right] \in \mathds{SP}_{2n}$.
In fact, we have $M \in \mathds{SP}_{2n}(E \oplus G)$.
Indeed, by using the block forms of $x_k, y_k$ given by \eqref{eq:define_xj_and_yj}, we have for each $k \in \{1, \ldots, n \}$,
\begin{align}\label{eq:symplectic_equation}
    (E \oplus G)x_k &= d_k(E \oplus G) J y_k, \\
    (E \oplus G)y_k &= -d_k(E \oplus G) J x_k.
\end{align}

Now, let $k \in \{1,\ldots, n\}$ be arbitrary and define $W \coloneqq \left[ x_1, \ldots x_k, y_1, \ldots, y_k \right]$. 
It is straightforward to verify that $W^T J_{2n} W = J_{2k}$. 
Also, by the structure of $W$ and a routine computation with the trace operation, we get $\operatorname{tr}\! \left( W^T (E \oplus G) W \right) = \operatorname{tr}\! \left( W^T A W \right)$.
This gives
\begin{align}
 2 \sum_{j = 1}^k d_j(E \oplus G) &= \operatorname{tr}\! \left( W^T \left( E \oplus G \right) W \right) \\
 &= \operatorname{tr}\! \left( W^T A W \right) \\
 &\geq \min_{\substack{X \in \mathds{M}_{2n, 2k} \\ X^T J_{2n} X = J_{2k}}} \operatorname{tr}\! \left( X^T A X \right) \label{eq:trace_XAX} \\
 &= 2 \sum_{j=1}^k d_j(A). \label{eq:symplectic_ky_fan_appl}
\end{align}
The last equality is due to \cite[Theorem~5]{BhatiaJain2015_onsymplectic}.
This proves that $d(E \oplus G) \prec^w d(A)$.
\end{proof}

For the symplectic pinching $\mathscr{C}^s$ relative to $(1,\ldots, 1)\in \mathds{R}^n$ so that $\mathscr{C}^s (E \oplus G) = \Delta(E) \oplus \Delta(G)$, we have $d( \mathscr{C}^s (E \oplus G)) = \Delta_s(A)$ by Lemma~\ref{thm:main_lemma}, where $\Delta_s(A)$ is defined in \eqref{eq:definition_of_symplectic_diagonal}. 
It thus follows from Theorem~9 of \cite{BhatiaJain2015_onsymplectic} that
\begin{equation} \label{eq:symp-dia-pinching-wsm}
    \Delta_s(A) \prec^w d(E \oplus G).
\end{equation}
The relations \eqref{eq:pinching-maj-symplectic} and \eqref{eq:symp-dia-pinching-wsm} together imply that
\begin{equation} \label{eq:symplectic_Schur's_theorem}
    \Delta_s(A) \prec^w d(A).
\end{equation}
The weak supermajorization relation in \eqref{eq:symplectic_Schur's_theorem} is a symplectic analog of the classic Schur's theorem established in \cite[Theorem~1]{BhatiaJain2020_schurhorn}.
Furthermore, for any pinching $\mathscr{C}$, we appeal to Theorem~9 of \cite{BhatiaJain2015_onsymplectic} again to write
\begin{equation} \label{eq:d(C(E)+C(G)) is supermajorized by d(E+G)}
    d \left( \mathscr{C}(E) \oplus \mathscr{C}(G) \right) \prec^w d \left( E \oplus G \right).
\end{equation}
Our result \eqref{eq:pinching-maj-symplectic} together with \eqref{eq:d(C(E)+C(G)) is supermajorized by d(E+G)} yields
\begin{equation}
  d \left( \mathscr{C}(E) \oplus \mathscr{C}(G) \right) \prec^w d(A).
\end{equation}

The following example is a striking illustration of the fact that the weak supermajorization relation in \eqref{eq:pinching-maj-symplectic} may fail to hold for other types of pinching of $A$.

\begin{boxed}
    \begin{example}
        Let $A = \begin{bsmallmatrix}
            7 & 6 \\
            6 & 7
        \end{bsmallmatrix} \oplus 
        \begin{bsmallmatrix}
            7 & 6 \\
            6 & 7
        \end{bsmallmatrix}$
    and consider the pinching
    $\mathscr{C}(A) = \begin{bsmallmatrix}
            7 & 0 \\
            0 & 7
        \end{bsmallmatrix} \oplus 
        \begin{bsmallmatrix}
            7 & 6 \\
            6 & 7
        \end{bsmallmatrix}$
    of $A$ relative to $(1,3)$.
    Computation yields $d(A) = [1, 13]^T$ and $d(\mathscr{C}(A)) = [ 2.65, 9.54]^T$ showing that $d(\mathscr{C}(A))$ is not weakly supermajorized by $d(A)$.
    \end{example}
\end{boxed}

We now turn to discussing a few consequences of our main result.

\begin{boxed}
\begin{corollary} \label{thm:the_first_corollary}
 The eigenvalues of $\left(G^{1/2} E G^{1/2}\right)^{1/2}$ are weakly supermajorized by the symplectic eigenvalues of $A$, i.e.,
\begin{equation} \label{eq:the_first_consequence}
 \lambda \left( \left(G^{1/2} E G^{1/2}\right)^{1/2} \right) \prec^w d(A).   
 \end{equation}
\end{corollary}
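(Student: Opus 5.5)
This corollary follows by combining the two results already established in this section. By Lemma~\ref{thm:main_lemma}, specifically \eqref{eq:d(b) = lambda(GEG)}, the vector of symplectic eigenvalues of the pinched matrix $E \oplus G$, arranged in non-decreasing order, coincides with the vector of eigenvalues of $\left(G^{1/2} E G^{1/2}\right)^{1/2}$, also arranged in non-decreasing order:
\begin{equation*}
    d(E \oplus G) = \lambda\left( \left(G^{1/2} E G^{1/2}\right)^{1/2} \right).
\end{equation*}
Here we use that $E \oplus G \in \mathds{P}_{2n}$, since $E$ and $G$ are principal submatrices of the positive definite matrix $A$ and are therefore positive definite. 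Consequently $G^{1/2}$ and $\left(G^{1/2} E G^{1/2}\right)^{1/2}$ are well defined, and the hypotheses of both Lemma~\ref{thm:main_lemma} and Theorem~\ref{thm:weak_supermajorization_of_pinching} hold.

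Next I would invoke Theorem~\ref{thm:weak_supermajorization_of_pinching}, which gives $d(E \oplus G) \prec^w d(A)$. Weak supermajorization depends only on the two vectors, through their partial sums of entries in increasing order as in \eqref{eq:weak_supermajorization_definition}. So I can replace $d(E\oplus G)$ by the equal vector $\lambda\left(\left(G^{1/2} E G^{1/2}\right)^{1/2}\right)$ and obtain, for each $1 \le k \le n$,
\begin{equation*}
    \sum_{j=1}^k \lambda_j\left(\left(G^{1/2} E G^{1/2}\right)^{1/2}\right) = \sum_{j=1}^k d_j(E\oplus G) \geq \sum_{j=1}^k d_j(A).
\end{equation*}
This is exactly \eqref{eq:the_first_consequence}.

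I do not expect a genuine obstacle. The only point to check is that both vectors are sorted in the same non-decreasing order, so that the partial sums in \eqref{eq:weak_supermajorization_definition} correspond term by term. This is built into the paper's conventions for $d(\cdot)$ and $\lambda(\cdot)$, and into the fact that the square root is monotone, which is how \eqref{eq:d(b) = lambda(GEG)} was derived.
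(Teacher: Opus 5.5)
Your proposal is correct and is the same argument as the paper's: substitute the identity $d(E\oplus G)=\lambda\bigl((G^{1/2}EG^{1/2})^{1/2}\bigr)$ from Lemma~\ref{thm:main_lemma} into the relation $d(E\oplus G)\prec^w d(A)$ of Theorem~\ref{thm:weak_supermajorization_of_pinching}. Your remark that $E$ and $G$ are positive definite, being principal submatrices of $A$, is a correct justification that the paper leaves implicit.
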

\end{boxed}
\begin{proof}
 By substituting \eqref{eq:d(b) = lambda(GEG)} into \eqref{eq:pinching-maj-symplectic}, we get
\eqref{eq:the_first_consequence}.
 \end{proof}
 It is known \cite{bhatia1990_onthesingularvalues} that
 \begin{equation} \label{eq:Bhatia's singular value inequality}
     \lambda \left( \left( G^{1/2} E G^{1/2} \right)^{1/2} \right) \leq \lambda \left( \frac{E + G}{2} \right).
 \end{equation}
 The above inequality together with \eqref{eq:the_first_consequence} implies that
 \begin{equation} \label{eq:the second consequence}
     \lambda \left( \frac{E + G}{2} \right) \prec^w d(A).
 \end{equation}
 Alternatively, we can also arrive at \eqref{eq:the second consequence} by the following arguments.
Observe that $d(E \oplus G) = d(G \oplus E)$.
We have
\begin{align}
    \lambda\left( E+G \right) 
    &=d \left( \left( E + G \right) \oplus d \left( E + G \right) \right) \label{eq:from our main lemma}  \\
    &= d \left( (E \oplus G) + (G \oplus E) \right) \\
    &\prec^w d \left( E \oplus G \right) + d \left( G \oplus E \right) \label{eq:Hiroshima's result} \\
    &= 2 d(E \oplus G) \\
    &\prec^w 2 d(A). \label{eq:from our main result}
\end{align}
The equality \eqref{eq:from our main lemma} follows from \eqref{eq:square_roots_of_the_eigenvalues}, the weak supermajorization \eqref{eq:Hiroshima's result} is due to Theorem~1 of \cite{hiroshima2006_additivity}, and
\eqref{eq:from our main result} is from Theorem~\ref{thm:weak_supermajorization_of_pinching}.

\begin{boxed}
    \begin{corollary}
       For any pinching $\mathscr{C}$, we have
 \begin{align} \label{eq:the_third_consequence}
     \lambda \left( \left(\mathscr{C}(G)^{1/2} \mathscr{C}(E) \mathscr{C}(G)^{1/2} \right)^{1/2} \right) \prec^w \lambda \left( \left(G^{1/2} E G^{1/2}\right)^{1/2} \right).
 \end{align} 
In particular, 
 \begin{equation} \label{eq:G = I}
 \lambda \left( \mathscr{C}(E)^{1/2} \right) \prec^w \lambda \left(E^{1/2}\right).
 \end{equation}
    \end{corollary}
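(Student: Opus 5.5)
The plan is to turn both sides of \eqref{eq:the_third_consequence} into symplectic spectra using Lemma~\ref{thm:main_lemma}, and then apply the symplectic pinching result \eqref{eq:d(C(E)+C(G)) is supermajorized by d(E+G)} (Theorem~9 of \cite{BhatiaJain2015_onsymplectic}).

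First, apply \eqref{eq:d(b) = lambda(GEG)} to the pair $(E,G)$. This gives
\begin{equation*}
\lambda\left(\left(G^{1/2}EG^{1/2}\right)^{1/2}\right) = d(E\oplus G).
\end{equation*}
Next, let $\mathscr{C}$ be the pinching relative to $(m_1,\ldots,m_k)$. Since $E,G\in\mathds{P}_n$, the matrices $\mathscr{C}(E)$ and $\mathscr{C}(G)$ are again positive definite, because each diagonal block is a principal submatrix of a positive definite matrix. Hence $\mathscr{C}(E)\oplus\mathscr{C}(G)\in\mathds{P}_{2n}$, and Lemma~\ref{thm:main_lemma} applies to the pair $(\mathscr{C}(E),\mathscr{C}(G))$. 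It yields
\begin{equation*}
\lambda\left(\left(\mathscr{C}(G)^{1/2}\mathscr{C}(E)\mathscr{C}(G)^{1/2}\right)^{1/2}\right) = d\left(\mathscr{C}(E)\oplus\mathscr{C}(G)\right).
\end{equation*}

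The key observation is that $\mathscr{C}(E)\oplus\mathscr{C}(G)$ is exactly the $s$-pinching $\mathscr{C}^s(E\oplus G)$ of the block matrix $E\oplus G$, whose off-diagonal block is $F=0$; see \eqref{eq:definition_of_s_pinching}. Therefore \eqref{eq:d(C(E)+C(G)) is supermajorized by d(E+G)} gives
\begin{equation*}
d\left(\mathscr{C}(E)\oplus\mathscr{C}(G)\right)\prec^w d(E\oplus G).
\end{equation*}
Combining this with the two identities above proves \eqref{eq:the_third_consequence}.

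For the particular case \eqref{eq:G = I}, take $G=I_n$. Then $\mathscr{C}(G)=I_n$, the left side of \eqref{eq:the_third_consequence} becomes $\lambda(\mathscr{C}(E)^{1/2})$, and the right side becomes $\lambda(E^{1/2})$.

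No step is a real obstacle. The only points needing care are the positive definiteness of the pinched blocks, which is required to invoke Lemma~\ref{thm:main_lemma}, and checking that a classical pinching of both diagonal blocks of $E\oplus G$ coincides with its symplectic pinching.
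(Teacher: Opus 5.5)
Your proposal is correct and essentially matches the paper's proof. Both use Lemma~\ref{thm:main_lemma} to rewrite the two sides as $d(\mathscr{C}(E)\oplus\mathscr{C}(G))$ and $d(E\oplus G)$, then invoke \eqref{eq:d(C(E)+C(G)) is supermajorized by d(E+G)}, and finally specialize to $G=I_n$. Your two checks, that the pinched blocks are positive definite and that $\mathscr{C}(E)\oplus\mathscr{C}(G)=\mathscr{C}^s(E\oplus G)$, are details the paper leaves implicit.
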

\end{boxed}

\begin{proof}
By Lemma~\ref{thm:main_lemma}, we have
\begin{align}
d \left( E \oplus G \right) &= \lambda \left( \left(G^{1/2} E G^{1/2}\right)^{1/2} \right), 
\end{align}
and
\begin{align}
    d \left( \mathscr{C}(E) \oplus \mathscr{C}(G) \right)
    &= \lambda \left( \left( \mathscr{C}(G)^{1/2} \mathscr{C}(E) \mathscr{C}(G)^{1/2} \right)^{1/2} \right).
\end{align}
    The weak supermajorization relation \eqref{eq:the_third_consequence} then follows by substituting the above identities into \eqref{eq:d(C(E)+C(G)) is supermajorized by d(E+G)}.
    Also, \eqref{eq:G = I} follows by choosing $G$ equal to the identity matrix in \eqref{eq:the_third_consequence}.
\end{proof}

An alternative way to attain \eqref{eq:G = I} is as follows.
For any Hermitian matrix $E$ and any pinching $\mathscr{C}$, we have discussed that
\begin{equation} \label{eq:final result of pinching}
    \lambda \left( \mathscr{C}(E) \right) \prec \lambda(E).
\end{equation}
Noting that $\sqrt{x} : [0, \infty) \longrightarrow [0, \infty)$ is a concave function, an appeal to Theorem~5.A.1 of \cite{marshall1979inequalities} gives \eqref{eq:G = I}.

\section*{Acknowledgements}
    The first author thanks Nagaland University for granting study leave with pay. 
    The second author acknowledges support from FRS Project No.~MISC~0147.

\begin{backmatter}

\bibliographystyle{unsrtnat}
\bibliography{reference}
\end{backmatter}
\printaddress


    
\end{document}